\newtheorem{theorem}{Theorem}
\newtheorem{proposition}[theorem]{Proposition}
\newtheorem{definition}[theorem]{Definition}
\theoremstyle{remark}
\newtheorem{remark}{Remark}
\newtheorem*{remarkstar}{Remark}
\newtheorem*{example}{Example}
\numberwithin{theorem}{section} \numberwithin{equation}{section}
\newcommand{\ord}{\text {\rm ord}}
\newcommand{\gz}{\Gamma_0}
\newcommand{\gzn}[1]{\gz(#1)}
\newcommand{\gzN}{\gzn{N}}
\newcommand{\C}{\mathbb{C}}
\newcommand{\Q}{\mathbb{Q}}
\newcommand{\Z}{\mathbb{Z}}
\newcommand{\Etwo}{{}_2E_1}
\newcommand{\Etwon}[1]{\Etwo(#1)}
\newcommand{\Etwol}{\Etwon{\lambda}}
\newcommand{\Ftwo}{{}_2F_1}
\newcommand{\atwo}{{}_2a_1}
\newcommand{\atwop}[1]{\atwo(p;#1)}
\newcommand{\atwopl}{\atwo(p;\lambda)}
\newcommand{\atwonl}{\atwo(n;\lambda)}
\newcommand{\bp}[1]{B(p;#1)}
\newcommand{\bpl}{B(p;\lambda)}
\newcommand{\hgss}[5]{ #1 \left( \begin{smallmatrix} #2, & #3 \\ & #4 \end{smallmatrix} \left.\right| #5 \right)}
\newcommand{\hgsl}[7]{ #1 \left( \begin{smallmatrix} #2, & #3, & \hdots, & #4 \\ & #5, & \hdots, & #6 \end{smallmatrix} \left.\right| #7 \right)}
\begin{document}

\title{Elliptic Curves, Eta-Quotients and Hypergeometric Functions}
\author{David Pathakjee, Zef RosnBrick and Eugene Yoong}
\address{Dept. of Mathematics, University of Wisconsin, Madison, Wisconsin 53706}
\email{pathakjee@wisc.edu}
\email{rosnbrick@wisc.edu}
\email{yoong@wisc.edu}
\thanks{The authors wish to thank the NSF for supporting this research, and Ken Ono and Marie Jameson for their invaluable advice.}

\begin{abstract}
The well-known fact that all elliptic curves are modular, proven by Wiles, Taylor, Breuil, Conrad and Diamond, leaves open the question whether there exists a `nice' representation of the modular form associated to each elliptic curve. Here we provide explicit representations of the modular forms associated to certain Legendre form elliptic curves $_2E_1(\lambda)$ as linear combinations of quotients of Dedekind's eta-function. We also give congruences for some of the modular forms' coefficients in terms of Gaussian hypergeometric functions.
\end{abstract}
\maketitle
\noindent

\section{Introduction and Statement of Results}
In 1996, Wiles and Taylor proved in \cite{MR1333036} that all semistable elliptic curves over $\Q$ are modular. This result was later extended by Breuil, Conrad, Diamond and Taylor in \cite{MR1839918} to all elliptic curves over $\Q$.
This correspondence allows facts about elliptic curves to be proven using modular forms and vice versa.
(See \cite{MR1216136} for more background on the theory of elliptic curves and modular forms.)
%

Let $E$ be an elliptic curve over $\Q$. If $q := e^{2 \pi i z}$, $GF(p)$ is the finite field with $p$ elements, and $N(p)$ is the number of points on $E$ over $GF(p)$, then 
the modularity theorem
 implies that there exists a corresponding weight 2 newform $f(z) = \sum_{n = 1}^{\infty}{a(n)q^n}$ such that if $p$ is a prime of good reduction, then $a(p) = 1 + p  - N(p)$.
For example, if $\eta(z)$ is Dedekind's eta-function,
\begin{align*}
	\eta(z) := q^{\frac{1}{24}} \prod_{n = 1}^{\infty}{\left(1 - q^n\right)},
\end{align*}
then the elliptic curves $y^2 = x^3 + 1$ and $y^2 = x^3 - x$ have the corresponding modular forms $\eta(6z)^4$ and $\eta(4z)^2\eta(8z)^2$, respectively \cite{MR1401749}.

It is natural to ask which elliptic curves have corresponding modular forms that are quotients of eta-functions. Martin and Ono answer this question in \cite{MR1401749} by listing all such \emph{eta-quotients}
\begin{align*}
	f(z) = \prod_\delta{\eta(\delta z)^{r_\delta}} \quad (\delta, r_\delta \in \Z)
\end{align*}
which are weight 2 newforms, and they give corresponding modular elliptic curves.
(For more on the theory of eta-quotients, see \cite[Section 1.4]{MR2020489}.)

We show, for certain values of $\lambda \in \mathbb{Q}\setminus \{0,1\}$, that the elliptic curves $\Etwol$ defined by
\begin{equation}
\Etwol :  y^2 = x(x - 1)(x - \lambda)
\end{equation}
correspond to modular forms which are linear combinations of eta-quotients.

\begin{remarkstar}
The proof of Theorem \ref{thm:etathm} will make clear how one can generate many more such examples.
\end{remarkstar}

Let
\begin{equation}
	\label{def:2a1def}
	f_\lambda(z) := \sum_{n = 1}^{\infty}{\atwonl q^n}
\end{equation}
be the weight 2 newform corresponding to the elliptic curve $\Etwol$. It will be convenient to express eta-quotients using the following notation:
\begin{equation}
	\left[\prod_\delta{\delta^{r_\delta}}\right] := \prod_\delta{\eta(\delta z)^{r_\delta}}.
\end{equation}
For example, in place of $\frac{\eta(2z)^2\eta(4z)^2\eta(5z)\eta(40z)}{\eta(z)\eta(8z)}$ we will write $[1^{-1} 2^2 4^2 5^1 8^{-1} 40^1]$.

\begin{theorem}
\label{thm:etathm}
If $\lambda \in \{\frac{27}{16}, 5, \frac{81}{49}, -\frac{7}{25}\}$, then $\Etwol$ corresponds to the modular forms given in the following table:
\begin{center}
\begin{longtable}{ c c c }
\hline
\noalign{\smallskip}
$\mathbf{\lambda}$ & \bf{Conductor $N$} & \text{\bf{Eta-quotient $f_\lambda(z)$}} \\
\noalign{\smallskip}
\hline
\noalign{\bigskip}
{\large $\frac{27}{16}$} & $33$ & $[1^2 11^2] + 3 \cdot [3^2 33^2] + 3 \cdot [1^1 3^1 11^1 33^1]$ \\[10pt]
$5$ & $40$ & $[1^{-1} 2^2  4^2  5^1  8^{-1}  40^1] + [1^1  5^{-1}  8^1  10^2  20^2  40^{-1}]$  \\[10pt]
{\large $\frac{81}{49}$} & $42$ & $2\cdot [1^{-1} 2^2 3^1 7^2 14^{-1} 42^1] - 3\cdot [3^1 6^1 21^1 42^1] $\\&& $+ [2^1 3^2 6^{-1} 7^1 21^{-1} 42^2] + [1^1 3^{-1} 6^2  14^1 21^2 42^{-1}]$ \\[10pt]
{\large $ -\frac{7}{25}$} & $70$ & $[1^{-1} 2^2 5^2 7^{-1} 10^{-1} 14^2 35^2 70^{-1}] - [1^2 2^{-1} 5^{-1} 7^2 10^2 14^{-1} 35^{-1} 70^2]$ \\
\noalign{\bigskip}
\hline
\end{longtable}
\end{center}
\end{theorem}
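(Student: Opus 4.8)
The plan is to prove each row of the table by the same three-step procedure: first pin down the conductor $N$, then show that the displayed linear combination of eta-quotients, which I will call $g_\la(z)$, is a holomorphic modular form of weight $2$ on $\gzn{N}$, and finally identify $g_\la$ with the newform $f_\la$ by a finite coefficient comparison.

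First I would determine the conductor of $\Etwol$ for each of the four values of $\la$. Since the Legendre model $y^2 = x(x-1)(x-\la)$ is in general not minimal (and $\la$ is non-integral in three of the four cases), I would clear denominators to get an integral Weierstrass equation, pass to a global minimal model, and run Tate's algorithm prime by prime to read off the exponents of the conductor; this reproduces the values $N\in\{33,40,42,70\}$ recorded in the table. By the modularity theorem, $\Etwol$ then corresponds to a unique rational weight-$2$ newform $f_\la(z)=\sum_{n\ge 1}\atwonl q^n\in S_2(\gzn{N})$, singled out among the newforms of that level by $\atwopl = 1 + p - N(p)$ at every prime $p$ of good reduction.

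Second, I would show that the candidate $g_\la$ lies in $M_2(\gzn{N})$. Each bracketed term is an eta-quotient $\prod_\delta \eta(\delta z)^{r_\delta}$ with all $\delta\mid N$, so I would apply the Ligozat/Gordon--Newman conditions: the weight is $\tfrac12\sum_\delta r_\delta = 2$, the congruences $\sum_\delta \delta r_\delta\equiv 0$ and $\sum_\delta (N/\delta) r_\delta\equiv 0\pmod{24}$ guarantee invariance under $\gzn{N}$, and the fact that $\prod_\delta \delta^{r_\delta}$ is a perfect square forces the nebentypus to be trivial. Holomorphy is the delicate point, since several terms carry negative exponents and may therefore have poles at cusps. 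For each cusp $a/c$ with $c\mid N$ I would compute the order of vanishing of every term via Ligozat's formula
\begin{equation*}
\ord_{a/c}\!\left(\prod_\delta \eta(\delta z)^{r_\delta}\right) = \frac{N}{24}\sum_{\delta\mid N}\frac{\gcd(c,\delta)^2\, r_\delta}{\gcd(c,N/c)\,c\,\delta},
\end{equation*}
and then verify that at each cusp the total contribution of the combination is nonnegative: either every term is already holomorphic there, or the coefficients are chosen so that the negative principal parts cancel. This places $g_\la\in M_2(\gzn{N})$; since the order at $\infty$ is at least $1$, $g_\la$ in fact vanishes there.

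Third, I would invoke Sturm's bound. Both $g_\la$ and $f_\la$ lie in the finite-dimensional space $M_2(\gzn{N})$, so they coincide as soon as their $q$-expansions agree through $B = \left\lfloor \tfrac{2}{12}\,[\SL_2(\Z):\gzn{N}]\right\rfloor = \left\lfloor \tfrac{N}{6}\prod_{p\mid N}(1+\tfrac1p)\right\rfloor$. I would expand $g_\la$ to $O(q^{B+1})$ by multiplying out the eta-products, and independently compute $\atwonl$ for $n\le B$ from the point counts $N(p)=\#\Etwol(GF(p))$ together with Hecke multiplicativity and the weight-$2$ prime-power recursion $\atwo(p^{k+1};\la)=\atwopl\,\atwo(p^k;\la)-p\,\atwo(p^{k-1};\la)$. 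Agreement through $q^B$ then forces $g_\la=f_\la$. The main obstacle is the cusp bookkeeping in the second step: because the individual eta-quotients are only weakly holomorphic, one must check that the chosen coefficients make the principal parts cancel at every cusp so that the combination is genuinely holomorphic. Once this is organized at one level it becomes mechanical, which is precisely what the preceding remark promises—the recipe of selecting candidate eta-quotients of the right weight and level, solving for coefficients that kill the principal parts, and verifying against the Sturm bound generates many further examples.
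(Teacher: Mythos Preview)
Your outline is correct and would succeed, but it diverges from the paper's argument in two places worth noting.

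First, the paper does not rely on cancellation of principal parts. It checks, using the cusp-order formula you quote, that \emph{each individual eta-quotient} in the table has strictly positive order of vanishing at every cusp of $\gzn{N}$; hence every term already lies in $S_2(\gzn{N})$, and the linear combination is automatically a cusp form. Your worry that ``the coefficients are chosen so that the negative principal parts cancel'' is therefore unnecessary for these particular quotients, and your proposed check at cusps other than $\infty$ (which would require expansions there, not just orders) never actually has to be carried out. The trade-off is that your formulation is more robust: it would still apply in situations where the building blocks are only weakly holomorphic, whereas the paper's argument needs each term to be a genuine cusp form.

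Second, for the final identification the paper does not invoke Sturm's bound. Instead it computes $d_N=\dim_\C S_2(\gzn{N})$ from the explicit dimension formula, exhibits $d_N$ eta-quotients in $S_2(\gzn{N})$ whose first few Fourier coefficients are linearly independent (hence a basis), and then observes that agreement on those $d_N$ distinguished coefficients forces equality with $f_\la$. Your Sturm-bound approach reaches the same conclusion with a larger but uniform coefficient range and avoids having to produce a full basis; the paper's approach typically needs fewer coefficients but requires finding enough eta-quotients to span the space. Either route completes the proof.
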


We show, for all $\lambda \in \Q \setminus \{0, 1\}$, that the Fourier coefficients of all $f_\lambda(z)$ satisfy an interesting hypergeometric congruence. For a prime $p$ and an integer $n$, define $\ord_p(n)$ to be the power of $p$ dividing $n$, and if $\alpha = \frac{a}{b} \in \Q$, then set $\ord_p(\alpha) = \ord_p(a) - \ord_p(b)$. We show that with this notation, the numbers $\atwopl$ satisfy the following congruences.

\begin{theorem}
\label{thm:2a1thm}
If $\lambda \in \Q \setminus\{0,1\}$ and $p = 2f + 1$ is an odd prime such that $\ord_p(\lambda(\lambda-1)) = 0$, then
\begin{align*}
\atwopl \equiv (-1)^{\frac{p+1}{2}}(p-1)\sum_{k=0}^{f}{f + k \choose k}{f \choose k}(-\lambda)^k \pmod{p}.
\end{align*}
\end{theorem}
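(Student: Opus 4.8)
The plan is to identify $\atwopl$ with a trace of Frobenius, rewrite that trace as a Legendre-symbol character sum, and then reduce the sum modulo $p$ to a binomial expression. The hypothesis $\ord_p(\lambda(\lambda-1))=0$ guarantees that $p$ is odd with $\lambda\not\equiv 0,1\pmod p$, so $\Etwol$ has good reduction at $p$ and reduces to a nonsingular Legendre curve over $GF(p)$. By the modularity statement recalled in the introduction, $\atwopl = p+1-N(p)$. Counting the projective points of $y^2=x(x-1)(x-\lambda)$ over $GF(p)$ by summing $1+\leg{x(x-1)(x-\lambda)}{p}$ over $x$ and adjoining the point at infinity gives $N(p)=p+1+\sum_{x\in GF(p)}\leg{x(x-1)(x-\lambda)}{p}$, so that $\atwopl=-\sum_{x}\leg{x(x-1)(x-\lambda)}{p}$.

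Next I would pass to a congruence using Euler's criterion $\leg{a}{p}\equiv a^{(p-1)/2}=a^f\pmod p$, which also holds when $a=0$ since $f\ge 1$. This yields $-\atwopl\equiv\sum_{x\in GF(p)}x^f(x-1)^f(x-\lambda)^f\pmod p$. I would then expand the polynomial and apply the standard power-sum evaluation $\sum_{x\in GF(p)}x^m\equiv-1\pmod p$ when $(p-1)\mid m$ and $m>0$, and $\equiv 0$ otherwise. Since $x^f(x-1)^f(x-\lambda)^f$ has terms only in degrees $f$ through $3f$, and $2f=p-1$ is the unique positive multiple of $p-1$ in that range, only the coefficient of $x^{2f}$ survives; that coefficient equals the coefficient of $x^f$ in $(x-1)^f(x-\lambda)^f$, and a direct expansion collapses (via $i+j=f$) to $(-1)^f\sum_{k=0}^f\binom{f}{k}^2\lambda^k$. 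Hence $\atwopl\equiv(-1)^f\sum_{k=0}^f\binom{f}{k}^2\lambda^k\pmod p$.

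It remains to match this with the stated sum. The bridge is the elementary congruence $\binom{f}{k}\equiv(-1)^k\binom{f+k}{k}\pmod p$, which follows from $f\equiv-\tfrac12\pmod p$ (because $2f=p-1$): reducing each factor of $\binom{f}{k}=\prod_{j=0}^{k-1}(f-j)/k!$ and of $\binom{f+k}{k}=\prod_{j=1}^{k}(f+j)/k!$ modulo $p$ shows both equal $(2k-1)!!/(2^kk!)$ up to the sign $(-1)^k$. Replacing one factor $\binom{f}{k}$ by $(-1)^k\binom{f+k}{k}$ turns $\binom{f}{k}^2\lambda^k$ into $\binom{f+k}{k}\binom{f}{k}(-\lambda)^k$. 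Finally I would rewrite the global sign using $(-1)^f=(-1)^{f+1}(-1)\equiv(-1)^{(p+1)/2}(p-1)\pmod p$, since $f+1=(p+1)/2$ and $-1\equiv p-1$. Combining these gives exactly the asserted congruence.

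The routine ingredients (point counting, Euler's criterion, the power-sum formula) present no difficulty; the crux is the binomial congruence $\binom{f}{k}\equiv(-1)^k\binom{f+k}{k}\pmod p$ that converts the symmetric sum $\sum\binom{f}{k}^2\lambda^k$ into the stated hypergeometric-type sum, together with careful bookkeeping of the signs and the verification that only the $x^{2f}$ term contributes. I expect that sign-and-coefficient bookkeeping, rather than any deep input, to be the main thing to get right.
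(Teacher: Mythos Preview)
Your argument is correct and self-contained, but it follows a genuinely different route from the paper's. The paper does not compute the character sum directly; it quotes two results of Ono from \cite{MR1407498}: the identity $\Ftwo(\lambda)=-\phi(-1)\,\atwopl/p$ expressing the trace of Frobenius as a Gaussian hypergeometric value, and the congruence $D(f;1,1,-\lambda)\equiv\tfrac{p}{p-1}\,\Ftwo(\lambda)\pmod p$ relating that value to the generalized Ap\'ery number $D(f;1,1,-\lambda)=\sum_k\binom{f+k}{k}\binom{f}{k}(-\lambda)^k$. Combining these and clearing the factor $p-1$ yields the theorem in two lines. You instead recover the classical congruence $\atwopl\equiv(-1)^f\sum_k\binom{f}{k}^2\lambda^k\pmod p$ via point counting, Euler's criterion, and the power-sum lemma, and then convert it to the stated form using $\binom{f}{k}\equiv(-1)^k\binom{f+k}{k}\pmod p$. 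What your approach buys is elementarity and independence from the hypergeometric machinery; what the paper's approach buys is that it situates the result inside the Gaussian-hypergeometric framework the section is built around, so that the factor $\binom{f+k}{k}\binom{f}{k}$ appears naturally as an Ap\'ery number rather than via an ad hoc binomial transformation.
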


\begin{remark}
In light of Theorem \ref{thm:etathm}, this will imply that the congruence in Theorem \ref{thm:2a1thm} holds for the coefficients of the linear combinations of eta-quotients given above.
\end{remark}

\begin{remark}
 A well-known theorem of Hasse states that for every prime $p$,
\begin{align*}
|a(p)| < 2\sqrt{p}.
\end{align*} 
Theorem \ref{thm:2a1thm} therefore determines $\atwopl$ uniquely for primes $p > 16$.
\end{remark}

\begin{example}
Consider $\lambda = \frac{27}{16}$. Then $\lambda(\lambda-1) = \frac{3^3 \cdot 11}{2^8}$ and so for $p \notin \{2, 3, 11\}$ prime we observe the congruence by inspecting the coefficients of $\Etwon{\frac{27}{16}}$ for applicable primes $p < 30$, where $\bpl$ is defined to be the right hand side of the congruence in Theorem $\ref{thm:2a1thm}$: 
\begin{center}
\begin{longtable}{ c c c }
\hline
\noalign{\smallskip}
$p$  & $\atwop{\frac{27}{16}}$ & $\bp{\frac{27}{16}}$ \\
\noalign{\smallskip}
\hline
\noalign{\smallskip}
$5$  & $-2 \equiv 3  \pmod{5}$  & $3$  \\[10pt]
$7$  & $4  \equiv 4  \pmod{7}$  & $4$  \\[10pt]
$13$ & $-2 \equiv 11 \pmod{13}$ & $11$ \\[10pt]
$17$ & $-2 \equiv 15 \pmod{17}$ & $15$ \\[10pt]
$19$ & $0  \equiv 0  \pmod{19}$ & $0$  \\[10pt]
$23$ & $8  \equiv 8  \pmod{23}$ & $8$  \\[10pt]
$29$ & $-6 \equiv 23 \pmod{29}$ & $23$ \\
\noalign{\smallskip}
\hline
\end{longtable}
\end{center}
\end{example}

\section{Elliptic Curves and Modular Forms}

In this section we prove Theorem \ref{thm:etathm}. If $E$ is an elliptic curve over $\Q$, then its conductor $N$ is a product of the primes $p$ of bad reduction for $E$, with exponents determined by the extent to which $E$ is singular over $GF(p)$. (An algorithm by Tate for computing conductors is given in \cite{MR1628193}.) Moreover, the modularity theorem implies that the modular form $f(z)$ corresponding to $E$ is an element of $S_2(\gzN)$. In particular, for an elliptic curve $\Etwol$, proving the correctness of any representation of $f_\lambda(z)$ in terms of eta-quotients amounts to checking that the given eta-quotients are elements of $S_2(\gzN)$ and checking a finite number of coefficients of their Fourier expansions against those of $f_\lambda$.

We first provide a formula for the dimension of the space of cusp forms of weight 2 and level $N$, $S_2(\gzN)$. We then show that the eta-quotients making up the linear combinations are elements of $S_2(\gzN)$ and use the dimension formula to show that equality of two elements of $S_2(\gzN)$ always depends only on some finite set of coefficients. 

The linear combinations of eta-quotients in this paper were generated by the following `algorithm': 
\begin{enumerate}
\item Given a rational number $\lambda \notin \{0,1\}$, compute the conductor $N$ of $\Etwol$. (The modular form corresponding to $\Etwol$ will be an element of $S_2(\gzN)$.)
\item Compute $\dim_\C(S_2(\gzN))$.
\item Generate eta-quotients which are elements of $S_2(\gzN)$.
\item Attempt to construct a basis for $S_2(\gzN)$ using these eta-quotients.
\end{enumerate}

Of course, once one is armed with a basis of eta-quotients for $S_2(\gzN)$, it is simple to express $f_\lambda(z)$ in terms of this basis.

\subsection{Dimension of $S_2(\gzN)$}
It will be useful to know not only that $S_2(\gzN)$ is finite-dimensional for every positive integer $N$, but also its exact dimension $d_N:=\dim_\C(S_2(\gzN))$. 

The following formula for $d_N$ is a simplification of \cite[Thm 1.34]{MR2020489}, which gives a formula for the quantity $\dim_\C(S_k(\gzN,\chi))- \dim_\C(M_{2-k}(\gzN,\chi))$, in the case where $k=2$ and $\chi = \epsilon$ is the trivial character modulo $N$.

\begin{proposition}If $N$ is a fixed positive integer and $r_p := \ord_p(N)$, then define

\begin{align*}
\lambda_p := 
\begin{cases}
p^{\frac{r_p}{2}} + p^{\frac{r_p}{2} -1} & r_p \equiv 0 \pmod{2} \\
2p^{\frac{r_p-1}{2}} & r_p \equiv 1 \pmod{2}.
\end{cases}
\end{align*}
With this notation,
\begin{equation}
\label{eqn:dimension}
d_N =
1 
+  \frac{N}{12}\prod_{p | N}(1+p^{-1})
-  \frac{1}{2}\prod_{p|N}\lambda_p
-  \frac{1}{4}\sum_{\substack{x\pmod{N}\\x^2+1\equiv0\pmod{N}}}{1}
-  \frac{1}{3}\sum_{\substack{x\pmod{N}\\x^2+x+1\equiv0\pmod{N}}}{1}.
\end{equation}

\begin{proof}
This follows from \cite[Thm 1.34]{MR2020489}, noting that the conductor of the trivial character is 1 and that $M_0(\gzN,\epsilon)$ is the space of constant functions and hence has dimension 1.
\end{proof}
\end{proposition}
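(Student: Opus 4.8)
The plan is to derive \eqref{eqn:dimension} as a specialization of the general dimension formula \cite[Thm 1.34]{MR2020489}, which computes the difference $\dim_\C(S_k(\gzN,\chi)) - \dim_\C(M_{2-k}(\gzN,\chi))$ for arbitrary weight $k$ and Nebentypus $\chi$. First I would set $k = 2$ and take $\chi = \epsilon$ to be the trivial character modulo $N$. Because $2 - k = 0$ and $\epsilon$ is trivial, the space $M_{2-k}(\gzN,\epsilon) = M_0(\gzN,\epsilon)$ consists exactly of the holomorphic weight-zero forms on $\gzN$, i.e.\ the constant functions, so its dimension is $1$. Solving the difference formula for $\dim_\C(S_2(\gzN)) = d_N$ then moves this dimension to the other side and accounts for the leading $+1$ in \eqref{eqn:dimension}.

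Next I would match the surviving terms one at a time against \cite[Thm 1.34]{MR2020489}. With $k = 2$ the weight factor $\tfrac{k-1}{12}$ becomes $\tfrac{1}{12}$, yielding the index term $\tfrac{N}{12}\prod_{p\mid N}(1 + p^{-1})$. The term $-\tfrac12\prod_{p\mid N}\lambda_p$ comes from the general local product over primes dividing $N$, whose factors in Ono's statement depend both on $r_p = \ord_p(N)$ and on the $p$-part of the conductor of $\chi$; here I would use that the conductor of $\epsilon$ equals $1$, so the conductor's $p$-part is trivial for every $p$, and verify that the resulting local factor collapses exactly to the two-case definition of $\lambda_p$ in the statement. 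Finally, the two sums counting residues $x \pmod N$ with $x^2 + 1 \equiv 0$ and with $x^2 + x + 1 \equiv 0$, weighted by $-\tfrac14$ and $-\tfrac13$, record the contributions of the elliptic points of orders $2$ and $3$; for the trivial character the character-twisted elliptic counts in the general formula degenerate to these plain congruence-solution counts.

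The main obstacle I anticipate is not conceptual but a matter of careful bookkeeping: one must align Ono's notation for the local conductor factors and the elliptic-point weights with the simplified expressions stated here. The most delicate point is confirming that trivializing the conductor turns his general local factor into precisely $p^{r_p/2} + p^{r_p/2 - 1}$ when $r_p$ is even and $2p^{(r_p-1)/2}$ when $r_p$ is odd, and that the cusp term collapses consistently with the chosen normalization. Once these term-by-term identifications are checked, \eqref{eqn:dimension} follows at once.
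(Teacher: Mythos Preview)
Your proposal is correct and follows exactly the paper's approach: specialize \cite[Thm~1.34]{MR2020489} to $k=2$ and $\chi=\epsilon$, use that the conductor of $\epsilon$ is $1$, and use that $M_0(\gzN,\epsilon)$ is the space of constants so has dimension $1$. The paper's own proof is in fact just this one-sentence citation without the term-by-term bookkeeping you outline, so your version is, if anything, more explicit than what appears there.
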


\subsection{Proof of Theorem \ref{thm:etathm}}
\begin{proof}
Let $N$ be the conductor of $E = \Etwol$ and let $d_N = \dim_\C(S_2(\gzN))$ as before. Conditions under which an eta-quotient is an element of $S_2(\gzN))$ are provided in \cite[Thm 1.64 and Thm 1.65]{MR2020489}: If $f(z) = \prod_{\delta|N}\eta(\delta z)^{r_\delta}$ is an eta-quotient which vanishes at each cusp of $\gzN$, such that the pairs $(\delta,r_\delta)$ satisfy $\sum_{\delta|N}r_\delta = 4$, as well as 
\begin{align*}
\sum_{\delta \mid N}{\delta r_\delta} \equiv 0 \pmod{24}
\end{align*}
and
\begin{align*}
\sum_{\delta \mid N}{\frac{N}{\delta} r_\delta} \equiv 0 \pmod{24},
\end{align*}
then $f(z) \in S_2(\gzN)$. The order of vanishing of such an $f(z)$ at the cusp $\frac{c}{d}$ is given by \cite[Th. 1.65]{MR2020489} as
\begin{equation} 
\label{eqn:cusp}
\frac{N}{24}\sum_{\delta|N}\frac{\gcd(d,\delta)^2r_\delta}{\gcd(d,\frac{N}{d})d\delta}.
\end{equation}

It is straightforward to check that the formula above gives a positive order of vanishing for each eta-quotient at each cusp, that each eta-quotient satisfies the given congruence conditions, and that the $r_\delta$ of each eta-quotient sum to 4. These conditions guarantee that each eta-quotient appearing in the table above lies in $S_2(\gzN)$.

The eta-quotients given for $\lambda = \frac{27}{16}$ form a basis for $S_2(\gzn{33})$. Similarly, for $\lambda = 5$, the given eta-quotients along with $[2^2 10^2]$ form a basis; for $\lambda = \frac{81}{49}$ the given eta-quotients along with $[ 1^{-1} \allowbreak 2^2 \allowbreak 3^2 \allowbreak 6^{-1} \allowbreak 7^{-1} \allowbreak 14^2 \allowbreak 21^2 \allowbreak 42^{-1} ]$ form a basis; and for $\lambda = -\frac{7}{25}$ a complete basis is
\begin{eqnarray*}
&& \{ [5^2 7^2], [1^{-1} 2^2 7^2 10^1 14^{-1} 35^1 ], [10^2 14^2], [1^2 2^{-1} 5^1 7^{-1} 14^2 70^1 ], [1^2 2^{-1} 5^{-1} 7^2  10^2 14^{-1} 35^{-1} 70^2], \\
&& [1^1 5^1 7^1 35^1 ], [1^1 5^2 10^{-1} 14^1 35^{-1} 70^2], [5^1 10^1 35^1 70^1 ], [1^{-1} 2^2 5^1 7^1 35^{-1} 70^2 ] \}.
\end{eqnarray*}

%
%
%

To see this, let $g_{i,j}$ be the $j^{\text{th}}$ Fourier coefficient of the $i^\text{th}$ basis vector $g_i$ and define $t_1 < \ldots < t_{d_N}$ to be the first ascending set of indices for which the vectors $\{(g_{i,t_{j}})_{j=1}^{d_N}\}_{i=1}^{d_N}$ are linearly independent. One can find such a sequence by direct computation of the Fourier coefficients 
and inspection of the matrices $[g_{i,t_{j}}]_{i,j = 1}^{d_N}$ for various choices of small $t_1 < \ldots < t_{d_N}$.

Now let $v_i = (g_{i, t_1}, \ldots, g_{i, t_{d_N}})$ and let $b_1, \ldots, b_{d_N}$ be a basis for $S_2(\gzN)$. If we have $h_1, h_2 \in S_2(\gzN)$ with equal $t_i^\text{th}$ coefficents, then these coefficients are zero in the difference $h_1 - h_2$. But $h_1 - h_2$ can be written as a linear combination $\sum c_ib_i$ of basis elements, for constants $c_i$. Hence $\sum c_iv_i = 0$ in $\mathbb{R}^{d_N}$, so by linear independence all $c_i = 0$, and thus $h_1 - h_2 = 0$. It therefore suffices to check that the coefficients of $f_\lambda$ on $q^{t_1},\ldots,q^{t_{d_N}}$ match the coefficients that result from the linear combination of eta-quotients.
\end{proof}

\begin{remarkstar}
In practice, these computations can be done using a computer algebra system such as SAGE.
\end{remarkstar}

\begin{example}
We show that the modular form corresponding to $\Etwon{\frac{27}{16}}$ is
\begin{align*}
g(z) := [1^2 11^2] + 3 \cdot [3^2 33^2] + 3 \cdot [1^1 3^1 11^1 33^1].
\end{align*}

For convenience, let $G = \{[1^2 11^2], [3^2 33^2], [1^1 3^1 11^1 33^1]\}$ be the set of eta-quotients making up the linear combination $g(z)$. The conductor of $\Etwon{\frac{27}{16}}$ is 33 and so the corresponding modular form $f_{\frac{27}{16}}(z)$ is an element of $S_2(\gzn{33})$. To show that $g(z)$ is also an element of $S_2(\gzn{33})$, it  suffices to show that $G \subset S_2(\gzn{33})$. Take $g_i(z) \in G$. By \cite[Thm 1.64]{MR2020489}, $g_i(z)$ is a modular form of weight 2 for $\gzn{33}$. Moreover, by \cite[Thm 1.65]{MR2020489}, $g_i(z)$ vanishes at all cusps of $\gzn{33}$, and thus $g_i(z) \in S_2(\gzn{33})$.

Since $\ord_3(33) = \ord_{11}(33) = 1$, we have $\lambda_3 = \lambda_{11} = 2$ and evaluation of the dimension formula in \eqref{eqn:dimension} gives
\begin{align*}
\dim_\C(S_2(\gzn{33})) &= 1 
+ \frac{33}{12}\prod_{p | 33}(1+p^{-1})
- \frac{1}{2}\prod_{p|33}\lambda_p 
- \frac{1}{4}\sum_{\substack{x\pmod{33}\\x^2+1\equiv0\pmod{33}}} 1 
- \frac{1}{3}\sum_{\substack{x\pmod{33}\\x^2+x+1\equiv0\pmod{33}}}1 \\
& = 1 
+ \frac{33}{12}\left(1 + \frac{1}{3}\right)\left(1+\frac{1}{11}\right)
- \frac{1}{2}(\lambda_3)(\lambda_{11}) 
- \frac{1}{4}(0)
- \frac{1}{3}(0) \\
& = 3.
\end{align*}

It remains to show that $G$ is a basis for $S_2(\gzn{33})$. Any dependence relation satisfied by the elements of $G$ would imply a dependence relation among their coefficients. It thus suffices to find a set of indices $t_1 < t_2 < t_3$ such that the $3 \times 3$ matrix formed by the $t_i^\text{th}$ coefficients of these eta-quotients is nonsingular. For this particular $\lambda$, the first three coefficients suffice.

This implies that any two elements of $S_2(\gzn{33})$ which agree on the first three coefficients are equal. In fact, we observe that the first three coefficients of the modular form corresponding to $\Etwon{\frac{27}{16}}$ are the same as the first three coefficients of $g(z)$. That is, the coefficients of
\begin{align*}
g(z) = q + q^2 - q^3 - q^4 + \ldots
\end{align*}
agree with the coefficients of $f_{\frac{27}{16}}(z)$.
\end{example}

\section{Gaussian Hypergeometric Functions and Proof of Theorem \ref{thm:2a1thm}}
We recall some facts about Gaussian hypergeometric functions over finite fields of prime order and use the Gaussian hypergeometric function $\hgss{\Ftwo}{\phi}{\phi}{\epsilon}{\lambda}$ to prove Theorem \ref{thm:2a1thm}.
\subsection{Gaussian Hypergeometric Functions}
In \cite{MR879564}, Greene defined {\it Gaussian hypergeometric functions} over arbitrary finite fields and showed that they have properties analogous to those of classical hypergeometric functions.
We recall some definitions and notation from \cite{MR1407498} in the case of fields of prime order.

\begin{definition}
If $p$ is an odd prime, $GF(p)$ is the field with $p$ elements, and $A$ and $B$ are characters of $GF(p)$, define
\begin{align*}
{A \choose B} := \frac{B(-1)}{p} J(A,\bar{B}) = \frac{B(-1)}{p} \sum_{x\in GF(p)}A(x)\bar{B}(1-x).
\end{align*}
Furthermore, if $A_0,\ldots,A_n$ and $B_1,\ldots,B_n$ are characters of $GF(p)$, define the Gaussian hypergeometric series $\hgsl{{}_{n+1}F_{n}}{A_0}{A_1}{A_n}{B_1}{B_n}{x}$ by the following sum over all characters $\chi$ of $GF(p)$:
\begin{align*}
\hgsl{{}_{n+1}F_{n}}{A_0}{A_1}{A_n}{B_1}{B_n}{x} := \frac{p}{p-1}\sum_\chi {A_0\chi \choose \chi}{A_1\chi \choose B_1 \chi} \cdots {A_n \chi \choose B_n \chi}\chi(x)
\end{align*}
\end{definition}
In particular, we are concerned with the Gaussian hypergeometric series $\Ftwo(\lambda)$ defined by
\begin{align*}
\Ftwo(\lambda):= \hgss{\Ftwo}{\phi}{\phi}{\epsilon}{\lambda} = \frac{p}{p-1}\sum_\chi {\phi\chi \choose \chi}^2\chi(\lambda)
\end{align*}
where $\phi$ is the quadratic character of $GF(p)$.
It is shown in \cite{MR1407498} that if $\lambda \in \Q \setminus\{0,1\}$, then
\begin{equation}\label{eq:2F2a}
\Ftwo(\lambda) = - \frac{\phi(-1)\atwopl}{p} \end{equation}
for every odd prime $p$ such that $\ord_p(\lambda(\lambda-1)) = 0$.

In addition, define the generalized Ap\'ery number $D(n;m,l,r)$ for every $r\in \Q$ and every pair of nonnegative integers $m$ and $l$ by
\begin{align*}
D(n;m,l,r):= \sum_{k=0}^n {n+k \choose k}^m {n \choose k}^l r^{lk}.
\end{align*}
Ono also shows (ibid.) that if $p = 2f + 1$ is an odd prime and $w = l + m$, then
\begin{equation}
\label{eq:Dfmlr}
D(f;m,l,r) \equiv \left(\frac{p}{p-1}\right)^{w-1} \hgsl{{}_{w}F_{w-1}}{\phi}{\phi}{\phi}{\epsilon}{\epsilon}{(-r)^l} \pmod{p}.
\end{equation}

\begin{proof}[Proof of Theorem \ref{thm:2a1thm}]
By \eqref{eq:2F2a}  and the fact that $\phi(-1) = (-1)^{\frac{p-1}{2}}$, we have that
\begin{align*}
{\frac{p}{p-1}}\Ftwo(\lambda) = {\frac{(-1)^{\frac{p+1}{2}}\atwopl}{p-1}}.
\end{align*}
Furthermore, by \eqref{eq:Dfmlr}, letting $l = m = 1$ (and thus $w = 2$) and $r = -\lambda$, we have that
\begin{align*}
\frac{p}{p-1}\Ftwo(\lambda) \equiv D\left(f;1,1,-\lambda\right) \pmod{p}.
\end{align*}
Combining these two equations and rearranging, we get that
\begin{align*}
\atwopl \equiv (-1)^{\frac{p+1}{2}}(p-1)D\left(f;1,1,-\lambda\right) \pmod{p}.
\end{align*}
Since $D\left(f;1,1,-\lambda\right) = \sum_{k=0}^n {f + k \choose k} {f \choose k} (-\lambda)^k$, we have that
\begin{align*}
\atwopl \equiv (-1)^{\frac{p+1}{2}}(p-1)\sum_{k=0}^{f}{{f + k \choose k}{f \choose k}(-\lambda)^k} \pmod{p}.
\end{align*}
\end{proof}

\begin{remarkstar}
The binomial product ${f + k \choose k}{f \choose k}$ can be combined into the multinomial coefficient ${f + k \choose k, \ k, \ f - k}$ and so the congruence in Theorem \ref{thm:2a1thm} can also be written as
\begin{align*}
\atwopl \equiv (-1)^{\frac{p+1}{2}}(p-1)\sum_{k=0}^{f}{{f + k \choose k, k, f - k}(-\lambda)^k} \pmod{p}.
\end{align*}
\end{remarkstar}

\bibliographystyle{alpha}
\bibliography{490paper}
\end{document}